\title{On Symplectic Capacities of Toric Domains}
\author{
Michael Landry\thanks{Yale University, \texttt{michael.landry@yale.edu}}, 
Matthew McMillan\thanks{Wheaton College, \texttt{matthew.mcmillan@my.wheaton.edu}}, 
Emmanuel Tsukerman\thanks{UC Berkeley, \texttt{e.tsukerman@math.berkeley.edu}}
}
\date{}
\theoremstyle{plain}
\newtheorem{Theorem}{Theorem}[section]
\newtheorem*{Theorem*}{Theorem}
\newtheorem{Lemma}[Theorem]{Lemma}
\newtheorem*{Lemma*}{Lemma}
\newtheorem{Corollary}[Theorem]{Corollary}
\newtheorem*{Corollary*}{Corollary}
\newtheorem{Proposition}[Theorem]{Proposition}
\newtheorem*{Proposition*}{Proposition}
\newtheorem{Conjecture*}{Conjecture}
\newtheorem{Claim*}{Claim}
\newtheorem*{Question*}{Question}
\theoremstyle{definition}
\newtheorem*{Definition*}{Definition}
\newtheorem{Remark}[Theorem]{Remark}
\newtheorem*{Remark*}{Remark}
\newtheorem{Assumption*}{Assumption}
\newtheorem{Disclosure*}{Disclosure}
\newtheorem{Criterion}[Theorem]{Criterion}
\newtheorem*{Criterion*}{Criterion}
\newcommand{\step}[1] {\medskip \noindent {\em Step #1.\/}}
\newcommand{\eqdef}{\;{:=}\;}
\newcommand{\C}{{\mathbb C}}
\newcommand{\R}{{\mathbb R}}
\newcommand{\op}{\operatorname}
\newcommand{\std}{\op{std}}
\newcommand{\id}{\op{id}}
\newcommand{\inte}{\op{int}}
\newcommand{\dist}{\op{dist}}
\newcommand{\ext}{\op{ext}}
\newcommand{\eps}{\varepsilon}
\newcommand{\bpm}{\begin{pmatrix}}
\newcommand{\epm}{\end{pmatrix}}
\begin{document}

	\maketitle
	
	\begin{abstract}
		A toric domain is a subset of $(\C^n,\omega_{\std})$ which is invariant under the standard rotation action of $\mathbb{T}^n$ on $\C^n$. For a toric domain $U$ from a certain large class for which this action is not free, we find a corresponding toric domain $V$ where the standard action is free, and for which $c(U)=c(V)$ for any symplectic capacity $c$. Michael Hutchings gives a combinatorial formula for calculating his embedded contact homology symplectic capacities for certain toric four-manifolds on which the $\mathbb{T}^2$-action is free. Our theorem allows one to extend this formula to a class of toric domains where the action is not free. We apply our theorem to compute ECH capacities for certain intersections of ellipsoids, and find that these capacities give sharp obstructions to symplectically embedding these ellipsoid intersections into balls.  
	\end{abstract}

	\section{Introduction}
	
	Symplectic capacities, introduced by Gromov and Hofer, are symplectic invariants that assign a nonnegative real number to a subset $U\subset (\C^n, \omega_{\std})$ and have the following properties:
	
	\begin{enumerate}
		\item[C1]
		\textbf{Monotonicity:} $ c(U)\leq c(V) $ if $ U\hookrightarrow V. $
		\item[C2]
		\textbf{Conformality:} $ c(\lambda U) = \lambda^2 c(U) $ for $ \lambda \in \R. $
		\item[C3]
		\textbf{Nontriviality:} $ 0<c(B^{2n}(1))<\infty. $
	\end{enumerate}
	Note that combining all three requires a finite capacity for any bounded $U$. Sometimes additional nontriviality and normalization axioms are also assumed, but we do not use them here. Many useful symplectic capacities have been defined; some are listed in \cite{chls}.
		
		Define the \textit{moment map} $\mu:\C^n\rightarrow \R^n$ of the symplectic manifold $(\C^n,\omega_{\std})$ by
	\[ \mu(z_1,\,\dots,z_n)=(\pi |z_1|^2,\dots, \pi |z_n|^2), \]
	where $\omega_{\std}$ is the standard symplectic form $\omega_{\std}=\sum_{i=1}^n dx_i\wedge dy_i$ on $\C^n$, and call $\mu(\C^n)$ the moment space. We call $U\subset (\C^n,\omega_{\std})$ a \textit{toric domain} when it can be written $U=\mu^{-1}(A)$ for some \textit{moment region} $A\subset \R_{\geq 0}^n$ in the moment space, or equivalently when it is invariant under the rotation action of $\mathbb{T}^n$ on $\C^n.$ Note that this is a special case of the more general moment map associated with a Hamiltonian action of a Lie group.
	
	Since these toric domains are uniquely represented by their moment regions, we will refer to a symplectic capacity $c(A)$ of a moment region $A$, and by this mean $ c(\mu^{-1}(A)) $. A simple calculation shows that C2 is equivalent to $c(\lambda A)=\lambda c(A)$.
	
		Our main theorem is that for a duly qualified toric domain $U$ whose moment region satisfies Criterion~\ref{Crit:disk-degenerate} given below, any symplectic capacity of $U$ is the same as the capacity of a toric domain with a free action, one whose moment region is $\mu(U)$ translated off the coordinate planes in the moment space.
		
	\begin{Criterion} \label{Crit:disk-degenerate}
		Let $A\subset \R_{\geq 0}^n.$ If $A$ intersects a coordinate plane $P_i=\{ (\rho_1,\,\dots,\rho_n)\in \R^n \mid \rho_i=0 \}$, then any line normal to $P_i$ has connected intersection with $A\cup P_i$.
	\end{Criterion}
	The necessary further qualifications are given in the theorem statement below. Figure~\ref{Fig:disk-degenerate} illustrates this condition for $n=2$. In this case Criterion~\ref{Crit:disk-degenerate} ensures that the toric domain is a disk bundle over its projection to the first complex plane of $\C^2$; more generally, for $A$ satisfying the other conditions below, Criterion~\ref{Crit:disk-degenerate} requires $\mu^{-1}(A)$ to be a (generalized) disk bundle over its projection to any coordinate plane $P_i$ which it touches. Disks in the fiber space degenerate to points where $A$ touches a coordinate plane.
	
		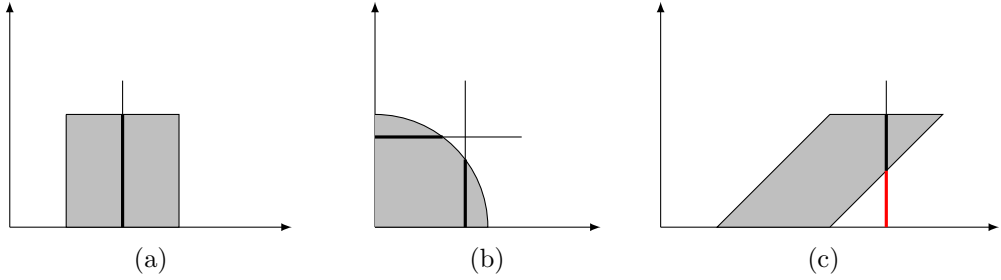
\begin{figure}[]
		\begin{subfigure}[b]{.3\textwidth}
			\centering
			\begin{tikzpicture}[>=latex,scale=1.5]
				\draw [->](0,0)--(0,2);
				\draw [->](0,0)--(2.5,0);
				\filldraw[fill=gray!50!white,even odd rule] (.5,1)--(1.5,1)--(1.5,0)--(.5,0)--(.5,1);
				\draw[very thick] (1,0)--(1,1);
				\draw (1,1)--(1,1.3);
			\end{tikzpicture}
			\caption{}
			\label{Fig:d-da}
		\end{subfigure}
		\begin{subfigure}[b]{.3\textwidth}
			\centering
			\begin{tikzpicture}[>=latex,scale=1.5]
				\draw [->](0,0)--(0,2);
				\draw [->](0,0)--(2,0);
				\filldraw[fill=gray!50!white] (0,0)--(1,0) arc (0:90:1cm);
				\draw[very thick] (.8,0)--(.8,.6);
				\draw (.8, .6)--(.8,1.3);
				\draw[very thick] (0,.8)--(.6,.8);
				\draw (.6,.8)--(1.3,.8);
			\end{tikzpicture}
			\caption{}
			\label{Fig:d-db}
		\end{subfigure}
		\begin{subfigure}[b]{.3\textwidth}
			\centering
			\begin{tikzpicture}[>=latex,scale=1.5]
				\draw [->](0,0)--(0,2);
				\draw [->](0,0)--(3,0);
				\filldraw[fill=gray!50!white,even odd rule] (1.5,1)--(2.5,1)--(1.5,0)--(.5,0)--(1.5,1);
				\draw[very thick] (2,.5)--(2,1);
				\draw (2,1)--(2,1.3);
				\draw[red, very thick] (2,0)--(2,.5);
			\end{tikzpicture}
			\caption{}
			\label{Fig:d-dc}
		\end{subfigure}
		\caption{Appropriate moment regions. (a), (b) satisfy the conditions of Criterion~\ref{Crit:disk-degenerate}, (c) does not.}
		\label{Fig:disk-degenerate}
	\end{figure}

	\begin{Theorem} \label{Thm:disk-degenerate}
		Let $A\subset \R^n_{\geq 0}$ be a moment region which is compact with star-shaped interior, and whose boundary intersects transversely the rays from the star-center. Assume that $A$ satisfies Criterion~\ref{Crit:disk-degenerate}. Then $c(A) = c(A+(1,1,\dots,1))$ for any symplectic capacity $c.$
	\end{Theorem}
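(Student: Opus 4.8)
The plan is to prove the two inequalities $c(A+(1,\dots,1))\le c(A)$ and $c(A)\le c(A+(1,\dots,1))$ separately, exploiting the fact that translating off the coordinate planes amounts, at the level of the toric domain, to deleting the non-free locus $N=\mu^{-1}(A)\cap\bigcup_i\{z_i=0\}$, a set of real codimension at least two. Throughout I would work in action--angle coordinates $(p_i,\theta_i)$, in which $p_i=\pi|z_i|^2$, $\omega_{\std}=\sum_i dp_i\wedge d\theta_i$, and the torus translation $(p,\theta)\mapsto(p+v,\theta)$ is symplectic wherever it is defined. The hypotheses that $A$ be compact with star-shaped interior and boundary transverse to the rays from the star-center enter only to guarantee that $\mu^{-1}(A)$ is the closure of its interior in a capacity-preserving way and to furnish smooth global radial coordinates for the construction in the second inequality; I would treat the resulting interior-versus-closure comparisons as routine.

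For the first inequality I would use the downward shift $\Phi\colon(p,\theta)\mapsto\bigl(p-(1,\dots,1),\theta\bigr)$. Since $A+(1,\dots,1)$ lies in $\{p_i\ge 1\}$, all action coordinates are positive on $\mu^{-1}(A+(1,\dots,1))$, the torus action there is free, and $\Phi$ is a genuine symplectomorphism from $\inte\mu^{-1}(A+(1,\dots,1))$ onto $\inte\mu^{-1}(A)\setminus N$. This is exactly where Criterion~\ref{Crit:disk-degenerate} is used: it guarantees that in each touching direction $i$ the fiber of $A$ is an interval $[0,f_i]$, so that subtracting $1$ carries the annulus $\{1\le p_i\le f_i+1\}$ bijectively onto the punctured disk $\{0<p_i\le f_i\}$, and no collision occurs. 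Monotonicity then gives $c(A+(1,\dots,1))\le c(A)$.

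The second inequality is the substance of the theorem, and the key point is that there is \emph{no} symplectic embedding $\mu^{-1}(A)\hookrightarrow\mu^{-1}(A+(1,\dots,1))$: fiberwise this would force a disk of area $f_i$ into an annulus of the same area, which is impossible. Instead I would prove that deleting $N$ does not lower the capacity, i.e. $c\bigl(\inte\mu^{-1}(A)\setminus N\bigr)=c(A)$, and then combine this with the symplectomorphism of the first step to conclude $c(A)=c(\inte\mu^{-1}(A)\setminus N)\le c(A+(1,\dots,1))$. The inequality $\le$ here is monotonicity applied to the inclusion; for the reverse $\ge$ I would construct, for every $\lambda\in(0,1)$, a symplectic embedding $\mu^{-1}(\lambda A)\hookrightarrow \inte\mu^{-1}(A)\setminus N$, and then apply conformality $c(\lambda A)=\lambda c(A)$ and let $\lambda\to1$. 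The mechanism of the embedding is that, using the disk-bundle structure furnished by Criterion~\ref{Crit:disk-degenerate}, one pushes each collapsing $z_i$-fiber off its zero-section: the scaled disk of area $\lambda f_i$ is mapped into the punctured disk of area $f_i$ available in $\inte\mu^{-1}(A)\setminus N$. Because $\lambda f_i<f_i$ there is genuine room to do this by an area-preserving ``wrapping'' map, even though a round translate of the disk would not fit.

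The main obstacle is precisely this last construction: assembling the fiberwise wrapping maps into a single smooth symplectic embedding that varies correctly with the base point and, above all, extends across the degenerate fibers where $f_i\to 0$ and the disk collapses to a point. This is where I expect the real work to lie, and where the transversality of $\partial A$ to the rays from the star-center, together with the disk-bundle structure of Criterion~\ref{Crit:disk-degenerate}, become essential: they provide a smooth description of $f_i$ and of the fibration, so that the wrapping amount can be chosen to depend smoothly on the base and to degenerate consistently as $f_i\to0$, while the uniform scaling by $\lambda<1$ supplies the room needed in every direction simultaneously. Once such an embedding is in hand, conformality and the limit $\lambda\to1$ close the argument.
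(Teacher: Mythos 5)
Your overall architecture matches the paper's: a lower bound via scaling/translation and monotonicity, and an upper bound via an area-preserving ``wrapping'' that pushes the domain off the non-free locus. (For the lower bound, note that Criterion~\ref{Crit:disk-degenerate} is not actually needed there, and your ``routine'' interior-versus-closure comparison is exactly where the star-shapedness and transversality hypotheses get used: one scales $A$ toward an interior star-center by $\lambda<1$ and lets $\lambda\to1$ using conformality.) The problem is the upper bound, which you correctly identify as the substance of the theorem: you reduce it to constructing, for each $\lambda<1$, a symplectic embedding $\mu^{-1}(\lambda A)\hookrightarrow \inte\mu^{-1}(A)\setminus N$, and then you explicitly defer that construction (``this is where I expect the real work to lie''). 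That is a genuine gap --- the one nontrivial embedding in the whole proof is asserted rather than produced, and the route you sketch toward it (fiberwise wrapping maps whose wrapping amount varies with the base point and must degenerate consistently as the fiber area $f_i\to0$) raises exactly the smoothness and degeneration issues you worry about, with no indication of how to resolve them.

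The paper fills this gap in a way that avoids the fiberwise difficulty entirely. It takes a \emph{single} area-preserving map $\Psi_a:B^2(a)\to SD^2(a+\eps)$ from the disk of the \emph{maximal} radius $a=\max_A\rho_1$ to the slit disk (Traynor/Schlenk, Lemma~\ref{Lem:wrap}), satisfying $\delta<|\Psi_a(z)|^2<|z|^2+\eps$, and applies the product map $\Psi_a\times\id\times\cdots\times\id$ to all of $\mu^{-1}(A)$ at once. The image point has first moment coordinate anywhere in $(\delta,\rho_1+\eps)$, possibly far below $\rho_1$; Criterion~\ref{Crit:disk-degenerate} is invoked precisely here, to guarantee that sliding down along the normal to $P_1$ stays in $A$, so the image lies within $\eps$ of $A$ and is bounded away from $P_1$ by $\delta$. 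The image is then absorbed into a translated expansion $A_\lambda+(\gamma,0,\dots,0)$, which has free action, and conformality with $\lambda\to1$ finishes the coordinate; one repeats in each touching coordinate. So no base-dependent wrapping and no extension across degenerate fibers is ever needed. To complete your proof you would either have to carry out your fiberwise construction in detail (nontrivial, and not obviously easier than starting over) or replace it with this global product-map argument.
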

	The theorem is proved by establishing equal lower and upper bounds on $c(A)$ in terms of $c(A+(1,1,\dots,1)).$ The lower bound follows readily from properties of toric domains and the axioms C1-C3, but for the upper bound we must combine the axioms with a nontrivial symplectic embedding. Since the proof assumes only the general axioms for capacities, this result holds for all symplectic capacities. Note that the action on a given toric domain $U=\mu^{-1}(A)$ is free if and only if $U$ does not intersect the origin in any $\mathbb{C}$ factor; that is, its moment region does not touch any coordinate plane $P_i=\{ (\rho_1,\,\dots,\rho_n)\in \R^n \mid \rho_i=0 \}$ in the moment space.

	The embedded contact homology (ECH) developed by Michael Hutchings provides a natural way to define certain symplectic capacities called ECH capacities. They are defined for any subset of a symplectic 4-manifold. In~\cite{hutchings}, Hutchings gives a combinatorial method to compute these capacities for toric domains over convex moment regions that do not touch the axes of the moment space $\R^2_{\geq 0}$ (that is, the torus action is free). This method is presented in~Section~\ref{AppECH}. In \cite[Rmk. 4.15]{hut1} and \cite[\S 1.2]{gardinerconcave}, it was conjectured that Hutchings's formula should remain true in most, and probably all, cases where $\mu(U)$ does touch one or both axes. Theorem~\ref{Thm:disk-degenerate} shows that this is true for the ECH capacities of a large class of toric domains by showing that it is true for all symplectic capacities.
	
		Given $a,b \in \R^+$, define the ellipsoid
	\begin{equation}
		\label{eqn:ellipsoid}
		E(a,b) \eqdef \left\{(z_1,z_2)\in\C^2 \;\bigg|\; \frac{\pi|z_1|^2}{a} +
		  \frac{\pi|z_2|^2}{b}\le 1\right\},
		\end{equation}
	the ball
	\[
	B(a) \eqdef E(a,a),
	\]
	and the polydisk
	\begin{equation}
		\label{eqn:polydisk}
		P(a,b)\eqdef \left\{(z_1,z_2)\in\C^2 \;\big|\; \pi|z_1|^2 \le a, \;\;
		  \pi|z_2|^2 \le b\right\},
		\end{equation}
	where each inherits the standard symplectic form from $\C^2$.
	
	In Section~\ref{AppECH} we use Theorem~\ref{Thm:disk-degenerate} to compute ECH capacities of a class of intersections of ellipsoids. We also study symplectic embeddings of domains from this class, proving the following proposition:
	
	\begin{Proposition} \label{Prop:non-squeezing}
		Let $R$ be the radius of the smallest ball containing $E(a,b)\cap E(c,d)$, $a<b,c>d$, and let $\rho=\inf\{r\mid E(a,b)\cap E(c,d)\hookrightarrow B(r)\}$. If $2a, 2d\ge R$, then $\rho=R$.
	\end{Proposition}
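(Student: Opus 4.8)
The plan is to sandwich $\rho$ between an elementary upper bound coming from set inclusion and a lower bound coming from an ECH capacity obstruction, and to show both equal $R$. Write $A\subset\R^2_{\geq0}$ for the moment region of $E(a,b)\cap E(c,d)$ in coordinates $(x,y)=(\pi|z_1|^2,\pi|z_2|^2)$, so that $A=T_1\cap T_2$ is the intersection of the triangles $T_1=\{x/a+y/b\leq1\}$ and $T_2=\{x/c+y/d\leq1\}$. This $A$ is compact and convex with the origin in its interior, hence a convex toric domain touching both coordinate axes, and it is routine to check that it satisfies Criterion~\ref{Crit:disk-degenerate} (a horizontal or vertical line meets a convex set in an interval) together with the star-shapedness and transversality hypotheses of Theorem~\ref{Thm:disk-degenerate} (no supporting line of an edge of the convex polygon $A$ passes through an interior star-center). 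Since $B(R)$ is by definition the smallest ball \emph{containing} the intersection, we have the set inclusion $E(a,b)\cap E(c,d)\subset B(R)$, that is $A\subset\{x+y\leq R\}$ with $R=\max_{(x,y)\in A}(x+y)$. As inclusion is a symplectic embedding, monotonicity gives $\rho\leq R$ immediately.

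For the lower bound I would use the second ECH capacity $c_2$. By Theorem~\ref{Thm:disk-degenerate} the ECH capacities of the axis-touching domain $A$ agree with the values produced by Hutchings's combinatorial formula (Section~\ref{AppECH}); specializing that formula to the generators relevant for the second capacity gives
\[
c_2(A)=\min\bigl\{\ell_A(1,1),\;2\ell_A(1,0),\;2\ell_A(0,1)\bigr\},\qquad \ell_A(v)=\max_{p\in A}\langle v,p\rangle .
\]
Each support value is a geometric maximum over the convex polygon $A$: one has $\ell_A(1,1)=\max_A(x+y)=R$, while $A\subset T_1$ forces $x\leq a,\ y\leq b$ and $A\subset T_2$ forces $x\leq c,\ y\leq d$, with the bounds attained on the axes, so that $\ell_A(1,0)=\max_A x=\min(a,c)$ and $\ell_A(0,1)=\max_A y=\min(b,d)$.

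It then remains to feed in the hypotheses. From $2a\geq R$ together with $a<b$ we get $2b>2a\geq R$, and from $2d\geq R$ together with $c>d$ we get $2c>2d\geq R$; hence $2\min(a,c)\geq R$ and $2\min(b,d)\geq R$. Consequently $c_2(A)=\min\bigl(R,\,2\min(a,c),\,2\min(b,d)\bigr)=R$. Since $c_2(B(r))=r$ for every $r$, any symplectic embedding $E(a,b)\cap E(c,d)\hookrightarrow B(r)$ forces $R=c_2(A)\leq c_2(B(r))=r$ by monotonicity (C1); taking the infimum over admissible $r$ yields $\rho\geq R$, and combined with the upper bound we conclude $\rho=R$.

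I expect the substantive obstacle to be the lower bound rather than the routine upper bound. Concretely, the work lies in (i) legitimately invoking the combinatorial formula on the axis-touching region $A$, which is exactly what Theorem~\ref{Thm:disk-degenerate} supplies once the hypotheses of that theorem are verified for $A$, and (ii) the observation that the two-sided bound $2a,2d\geq R$, in the presence of $a<b$ and $c>d$, is precisely the condition forcing all four of $2a,2b,2c,2d\geq R$; this is what prevents a competing generator of action $2\min(a,c)$ or $2\min(b,d)$ from lowering $c_2(A)$ below $R$ and thereby failing to detect the full radius. If either inequality is dropped, the corresponding generator can undercut $R$, so the hypothesis is sharp for this argument.
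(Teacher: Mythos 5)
Your overall strategy is exactly the paper's: the upper bound $\rho\le R$ comes from the inclusion $E(a,b)\cap E(c,d)\subset B(R)$, and the lower bound comes from the monotonicity of $c_2$ together with $c_2(B(r))=r$ and the claim $c_2(E(a,b)\cap E(c,d))=R$, where Theorem~\ref{Thm:disk-degenerate} is what licenses applying Hutchings's combinatorial formula to the axis-touching region. Your verification of the hypotheses of Theorem~\ref{Thm:disk-degenerate}, your identification of $\ell_A(1,1)=R$, $\ell_A(1,0)=a$, $\ell_A(0,1)=d$, and the arithmetic showing all three candidates are $\ge R$ under $2a,2d\ge R$ are all correct and match the paper.

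The one genuine gap is the step where you write
\[
c_2(A)=\min\bigl\{\ell_A(1,1),\,2\ell_A(1,0),\,2\ell_A(0,1)\bigr\}
\]
as a ``specialization to the relevant generators.'' The combinatorial formula as stated (and as stated in this paper, following \cite{hut1}) is a minimization over \emph{all} oriented lattice polygons enclosing $3$ lattice points --- an infinite family, since there are infinitely many primitive lattice triangles up to translation (e.g.\ with vertices $(0,0),(1,0),(n,1)$). Reducing that minimization to the three candidates you list is precisely the content of the bulk of the paper's proof of Lemma~\ref{Lem:echlemma}: one must show that any competing path $\Gamma$ with $\ell_{A'}(\Gamma)<R$ would need all edge-vector components at most $1$ with at most one positive in each coordinate (via the region-by-region evaluation of $\ell_{A'}$ against the normal fan of $A'$, using the hypotheses $2a,2d\ge R$ at exactly this point), hence $\Gamma\subset[0,1]^2$ up to translation, after which finitely many paths remain to check. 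Your three-term formula for $c_2$ of a convex toric domain is in fact true and appears in the literature on convex generators, so the gap is fillable either by a precise citation or by reproducing this case analysis; but as written, the only nontrivial part of the lower bound is asserted rather than proved, and your own list of ``substantive obstacles'' does not include it.
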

	
	It is known that ECH capacities provide sharp obstructions to symplectically embedding ellipsoids into ellipsoids (proved by McDuff~\cite{mcduff2}), and ellipsoids into polydisks (Frenkel-M\"uller~\cite{frenkel}). Recall that by Gromov's non-squeezing theorem \cite{gromov}, a ball symplectically embeds into a cylinder in $\R^{2n}$ if and only if the radius of the cylinder exceeds that of the ball. This is an illustration of symplectic rigidity, and is easily recovered from the ECH capacities on these domains. The computation of ECH capacities of the ellipsoid intersections above shows that they give sharp obstructions to symplectically embedding those ellipsoid intersections into balls. Since the balls have much larger volume than the ellipsoid intersections, Proposition~\ref{Prop:non-squeezing} is another example of symplectic rigidity.

	In Proposition~\ref{Prop:non-squeezing}, the ECH capacities give a sharp obstruction. Recent work of Hind and Lisi \cite{hind} shows that neither ECH capacities nor Ekeland-Hofer capacities give sharp obstructions to symplectic embeddings of arbitrary toric domains; in particular the ECH and Ekeland-Hofer obstructions to symplectically embedding a product of polydisks into a ball are not always sharp. The torus action on polydisks and balls is not free, so we might ask whether the situation is any different if we consider only toric domains for which the action is free. However, the case of free torus action is not different in this way, as the following corollary of Theorem~\ref{Thm:disk-degenerate} shows:
	
	\begin{Corollary} \label{Cor:NonSharp}
	Let $P^*(1,2)=\mu^{-1}(\mu(P(1,2))+(1,1))$ be a toric domain, let $a<3,$ and let $B^*(a)=\mu^{-1}(\mu(B^4(a))+(1,1)).$ There is no symplectic embedding $P^*(1,2)\hookrightarrow B^*(a).$
	\end{Corollary}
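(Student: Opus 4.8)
The plan is to argue by contradiction, converting a hypothetical embedding of the free (starred) domains into an embedding of the honest polydisk into a ball, which is ruled out by the sharp non-embedding theorem of Hind and Lisi~\cite{hind}. So I would suppose there is a symplectic embedding $\phi\colon P^*(1,2)\hookrightarrow B^*(a)$ with $a<3$ and derive a contradiction. First I would check that the theorem applies to both domains: the moment region $\mu(P(1,2))$ is a rectangle and $\mu(B^4(a))$ is a triangle, and each is compact with star-shaped interior and satisfies Criterion~\ref{Crit:disk-degenerate} (every horizontal or vertical line meets the region, together with the touched axis, in an interval), so Theorem~\ref{Thm:disk-degenerate} and, more importantly, the explicit symplectic embeddings built in its proof are available for each.

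The key step is to extract two embeddings from the proof of Theorem~\ref{Thm:disk-degenerate}: for every $0<\lambda<1$ and every $\epsilon>0$,
$$\lambda\,P(1,2)\hookrightarrow P^*(1,2)\qquad\text{and}\qquad B^*(a)\hookrightarrow B\big((1+\epsilon)a\big).$$
The first sends the product of disks $P(\lambda^2,2\lambda^2)$ (areas $\lambda^2<1$ and $2\lambda^2<2$) into the product of annuli $\mu^{-1}([1,2]\times[1,3])$; this is the ``disks into annuli'' map underlying the nontrivial upper bound. The second is the ``annuli into disks'' map underlying the easy lower bound, now applied to the triangle $T=\mu(B^4(a))$, using $(1+\epsilon)T=\mu(B^4((1+\epsilon)a))$. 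Composing both with $\phi$ gives $\lambda\,P(1,2)\hookrightarrow B\big((1+\epsilon)a\big)$, and rescaling by $z\mapsto z/\lambda$ yields $P(1,2)\hookrightarrow B\big((1+\epsilon)a/\lambda^2\big)$. Letting $\lambda\to1^-$ and $\epsilon\to0^+$ then produces $P(1,2)\hookrightarrow B(a+\delta)$ for every $\delta>0$; since $a<3$ I may pick $\delta$ with $a+\delta<3$, contradicting the theorem of Hind and Lisi that $P(1,2)$ does not symplectically embed into $B(a')$ for any $a'<3$.

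I expect the main obstacle to be handling the two degenerate, equal-area embeddings cleanly, since a closed disk does not embed into an annulus of the same area, nor conversely; this is precisely why the scalings $\lambda<1$ and $(1+\epsilon)$ and the final limit are forced on us, and why the argument must borrow the genuine embeddings from the proof of Theorem~\ref{Thm:disk-degenerate} rather than rest on the capacity identity alone. Indeed, the capacity identity yields only $c(P^*(1,2))=c(P(1,2))$ and $c(B^*(a))=c(B(a))$, and the known ECH values of $P(1,2)$ and $B(a)$ force merely $a\ge2$; the strictly stronger threshold $a\ge3$ has to come from the non-capacity obstruction of Hind and Lisi, transported through the explicit embeddings above. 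This is exactly the point the corollary is meant to make: even for a toric domain with free torus action, the symplectic capacities need not be sharp. The remaining bookkeeping — verifying the hypotheses of Theorem~\ref{Thm:disk-degenerate} for the rectangle and triangle, and tracking the rescaling constants in the limit — is routine.
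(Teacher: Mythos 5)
Your argument is correct and is essentially the paper's own proof: both sandwich the hypothetical embedding $P^*(1,2)\hookrightarrow B^*(a)$ between the Traynor-trick embedding of a (slightly shrunken) polydisk into $P^*(1,2)$ and the translation-plus-inclusion embedding of $B^*(a)$ into a slightly enlarged ball, then rescale and invoke Hind--Lisi. The only difference is bookkeeping --- you put the scaling factors $\lambda<1$ and $1+\epsilon$ on the source polydisk and target ball, while the paper instead rescales the hypothetical embedding itself to $f_\lambda:P^*_\lambda(1,2)\hookrightarrow B^*_\lambda(a)$ with $\lambda>1$ --- and this changes nothing of substance.
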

	
	This shows that neither ECH nor Ekeland-Hofer capacities are sharp even when we consider only toric domains with a free action because the obstruction given by both of these sequences of capacities is $a\geq 2$ (see \cite{hind}). This corollary is proved in Section~\ref{FreeDomains}.

	\section{Proof of main theorem} \label{MainTheorem}
		
		In this section we prove Theorem~\ref{Thm:disk-degenerate} by constructing symplectomorphisms as the products of area preserving maps. It will be convenient to have the following standard lemma, which shows that translations in the moment space induce symplectomorphisms on toric domains whose moment regions do not touch any coordinate plane.
		\begin{Lemma}
		Suppose $U\subset (\R^{2n},\omega_{\std})$ is a toric domain with free torus action such that $\mu(U)=A$, and $B$ is any translate of $A$ such that the torus action on $\mu^{-1}$ is also free. Then $U$ and $V=\mu^{-1}(B)$ are symplectomorphic. In particular, they have the same symplectic capacity for any capacity.
		\label{Lem:translates}
	\end{Lemma}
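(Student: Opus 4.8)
The plan is to realize the translation $B = A + v$, for a fixed vector $v = (v_1,\dots,v_n) \in \R^n$, by an explicit symplectomorphism $\Phi \colon U \to V$ that rescales the modulus of each complex coordinate while fixing its argument. Concretely, I would set
\[
\Phi(z_1,\dots,z_n) = (w_1,\dots,w_n), \qquad w_j = \sqrt{1 + \frac{v_j}{\pi|z_j|^2}}\; z_j .
\]
First I would check that $\Phi$ is well-defined and smooth on $U$. Since the torus action on $U$ is free, $\mu(U) = A$ meets no coordinate plane, so $\rho_j := \pi|z_j|^2 > 0$ throughout $U$; and since the action on $V$ is free, $B = A + v$ meets no coordinate plane either, which says precisely that $\rho_j + v_j > 0$ on $A$. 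Hence the radicand is positive and $w_j \neq 0$, so $\Phi$ lands in the free-action region. By construction $\pi|w_j|^2 = \pi|z_j|^2 + v_j = \rho_j + v_j$, so $\mu(\Phi(z)) = \mu(z) + v \in B$ and therefore $\Phi(U) \subseteq V$. The same formula with $v$ replaced by $-v$ provides a two-sided inverse, so $\Phi$ is a diffeomorphism $U \to V$.

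Next I would verify that $\Phi$ is symplectic by passing to action--angle coordinates on the free-action region. On $\{z_j \neq 0 \text{ for all } j\}$, writing $\rho_j = \pi|z_j|^2$ and $\theta_j = \arg z_j$, a direct computation in polar coordinates gives $dx_j \wedge dy_j = \tfrac{1}{2\pi}\, d\rho_j \wedge d\theta_j$, so that $\omega_{\std} = \tfrac{1}{2\pi}\sum_j d\rho_j \wedge d\theta_j$. In these coordinates $\Phi$ is simply the translation $(\rho_j,\theta_j) \mapsto (\rho_j + v_j, \theta_j)$, whence $\Phi^* d\rho_j = d\rho_j$ and $\Phi^* d\theta_j = d\theta_j$; therefore $\Phi^*\omega_{\std} = \omega_{\std}$. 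This shows $\Phi \colon U \to V$ is a symplectomorphism.

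Finally, a symplectomorphism is in particular a symplectic embedding in both directions, so monotonicity (C1) applied to $U \hookrightarrow V$ and to $V \hookrightarrow U$ gives $c(U) \le c(V)$ and $c(V) \le c(U)$, hence $c(U) = c(V)$ for every symplectic capacity $c$. I expect the only delicate point to be the well-definedness of $\Phi$, and this is handled precisely by the freeness hypotheses: freeness on both $U$ and $V$ is exactly what guarantees $\rho_j > 0$ and $\rho_j + v_j > 0$, keeping $\Phi$ away from the coordinate planes where the modulus-rescaling would be singular. Everything else is a routine coordinate computation, consistent with this being a standard lemma.
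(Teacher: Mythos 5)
Your proposal is correct and is essentially the paper's own argument: the paper conjugates the moment-space translation by the action--angle parametrization $g(\rho,\theta)=(\sqrt{\rho_j/\pi}\,e^{i\theta_j})_j$ and uses the same computation $g^*\omega_{\std}=\tfrac{1}{2\pi}\sum_j d\rho_j\wedge d\theta_j$, while you simply write out the resulting map explicitly in the complex coordinates. The verification via translation-invariance of $\tfrac{1}{2\pi}\sum_j d\rho_j\wedge d\theta_j$ and the role of freeness are identical in both.
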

	\begin{proof}
		We can parametrize $U$ by $ g:A\times \mathbb{T}^n \rightarrow U$ defined by
		\[ g(\rho_1,\dots,\rho_n,e^{i\theta_1}, \dots, e^{i\theta_n}) = (\sqrt{\frac{\rho_1}{\pi}}e^{i\theta_1},\dots, \sqrt{\frac{\rho_n}{\pi}}e^{i\theta_n}). \] Then we can pull back the standard symplectic form to $A\times \mathbb{T}^n$. A simple calculation shows that for the first term, \[g^*(dx_1\wedge dy_1) = \frac{1}{2\pi} d\rho_1\wedge d\theta_1,\]
		thus
		\[g^*\omega_{std} = \frac{1}{2\pi} \sum_{i=1}^{n} d\rho_i\wedge d\theta_i.\]

		It is clear that translation in moment space does not affect this last form, so conjugating a translation by this parametrization yields the desired symplectomorphism.
	\end{proof}
	
	Another important fact that can be seen from the proof of Lemma~\ref{Lem:translates} is that for a toric domain $U$ with free torus action and moment region $A$, the symplectic volume of $U$ is equal to the volume of $A$:
	\begin{align*}
		\op{vol}(U,\omega_{\std}) &=\frac{1}{n!}\int_U\omega_{\std}^n = \frac{1}{n!}\int_{A\times \mathbb{T}^n} (g^*\omega_{\std})^n \\
		&= \frac{1}{(2\pi)^n}\int_{A\times \mathbb{T}^n} d\rho_1\wedge \dots \wedge d\rho_n\wedge d\theta_1\wedge \dots \wedge d\theta_n \\
		&= \int_A d\rho_1\wedge \dots \wedge d\rho_n = \op{vol}(A).
	\end{align*}
	So a symplectic embedding of toric domains $ U\hookrightarrow V $ may be possible only if $ \op{vol}(\mu(U)) \leq \op{vol}(\mu(V)). $

	We will also use the following version of the ``Traynor trick'':
	\begin{Lemma}[cf. Proposition 5.2 of Traynor~\cite{traynor}] \label{Lem:wrap}
		Given $\eps>0$, there exists an area preserving diffeomorphism $\Psi :B^2(1)\rightarrow SD^2(1+\eps)=B^2(1+\epsilon)-\{x+i y \mid y=0, x\geq 0\}$ from the disk to the slit-disk such that \[ \delta<|\Psi(z)|^2<|z|^2+\eps \] for some $\delta>0$.
	\end{Lemma}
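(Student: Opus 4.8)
The plan is to reprove the wrapping map following Traynor by passing to symplectic polar (action–angle) coordinates, in which the disk becomes a rectangle with one edge collapsed, and then to build $\Psi$ as an area-preserving map that extrudes the collapsed core off the axis through the slit while displacing each point only slightly in the radial direction.

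First I would introduce coordinates $(\rho,\theta)$ with $\rho=\pi|z|^2$, exactly as in the proof of Lemma~\ref{Lem:translates}, so that the area form is $\tfrac{1}{2\pi}\,d\rho\wedge d\theta$. In these coordinates $B^2(1)$ is the rectangle $\{0\le\rho<1\}\times S^1_{\theta}$ with the core circle $\{\rho=0\}$ collapsed to the origin, while $SD^2(1+\eps)$ is the open rectangle $(0,1+\eps)\times(0,2\pi)$, the edges $\theta=0$ and $\theta=2\pi$ now being the two lips of the slit. Writing $\rho'$ for the target action variable, the required bounds $\delta<|\Psi(z)|^2<|z|^2+\eps$ become $\rho'>\pi\delta$ and $\rho'<\rho+\pi\eps$, so it suffices to produce an area-preserving embedding of the disk into this rectangle whose image avoids the band $\{\rho'\le\pi\delta\}$ and raises each action value by less than $\pi\eps$.

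Next I would construct the map. The elementary building blocks are the shears $(\rho,\theta)\mapsto(\rho+g(\theta),\theta)$ and $(\rho,\theta)\mapsto(\rho,\theta+h(\rho))$, each of which preserves $\tfrac{1}{2\pi}\,d\rho\wedge d\theta$ since its Jacobian determinant is $1$; realizing $\Psi$ as a composition of these (or as a single Hamiltonian isotopy) makes area preservation automatic and reduces the problem to controlling the geometry. The idea is that the nested circles $\{\rho=c\}$ should map to nested simple closed curves that together fill a thin region hugging the slit: the core maps to a single interior point at height $\rho'\in(\pi\delta,\pi\eps)$, and as $c$ grows the image curves swell into loops that run out along one lip of the slit, across at action level near $c$, and back along the other lip, each bounding normalized area $c$ (measured with $\tfrac{1}{2\pi}\,d\rho\wedge d\theta$) while staying in $\{\pi\delta<\rho'<c+\pi\eps\}$.

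The step I expect to be the main obstacle is turning this picture into a genuine smooth embedding, and in particular reconciling the collapsed core and the interior seam $\theta=0\sim2\pi$ of the source with the core-avoiding, radially controlled target. Because the source is a disk, each circle $\{\rho=c\}$ is contractible and the seam is interior, so $\Psi$ must be $2\pi$-periodic in $\theta$ and must send the core to an interior point; a naive translation fails, since moving the core clear of the axis costs a radial displacement of order one rather than $\eps$. This is exactly where the slit is essential: the core-avoiding part of the full disk $B^2(1+\eps)$ is an \emph{annulus}, and a contractible loop there bounds a disk missing the central hole, so it can never enclose the area needed as $c\to1$ while respecting the radial bound; deleting the slit cuts this annulus into a simply connected strip, of normalized area $1+\eps-\pi\delta>1$ once $\delta<\eps/\pi$, in which the nested image loops are free to grow until they enclose area $1$. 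The remaining work is to choose the profile functions so that this family of loops is smooth and strictly nested down to a smooth point at the core, giving an injective immersion with unit Jacobian, and to smooth across $\{\rho=0\}$ so that $\Psi$ extends to a diffeomorphism onto its image; the slack $\eps-\pi\delta>0$ is what lets these corners be rounded. Reading the bounds $\rho'>\pi\delta$ and $\rho'<\rho+\pi\eps$ off the construction and translating back to the $z$-coordinate then gives the stated inequalities, with $\delta$ the resulting lower constant.
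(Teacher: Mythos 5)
Your geometric picture coincides with the paper's: the image of the circle of area $c$ is a loop that hugs the slit, running out along one lip, around at action level just above $c$, and back along the other, and the two inequalities are read off from the loops all avoiding a fixed neighborhood of the origin while the loop of area $c$ stays below action level roughly $c+\eps$. The genuine gap is that the only nontrivial analytic content of Lemma~\ref{Lem:wrap} --- upgrading this family of loops to a smooth area-preserving diffeomorphism --- is precisely the step you defer as ``remaining work.'' The paper closes it by citing Schlenk~\cite[Lemma~3.1]{schlenk}, which states that a smooth family of nested embedded loops $\gamma_t$, each bounding area $t$ and shrinking to a point as $t\to 0$, is the image of the family of concentric circles under an area-preserving diffeomorphism; this is a Moser-type realization statement, not a formality. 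Your proposed building blocks do not obviously supply it: the radial shear $(\rho,\theta)\mapsto(\rho+g(\theta),\theta)$ is not even well defined at the collapsed core unless $g$ is constant there, and the target loops are not graphs over the angle (each must pass through both $\rho'\approx\pi\delta$ and $\rho'\approx c+\pi\eps$ at essentially the same angle, namely along the two lips of the slit), so no single radial shear, and no composition whose global injectivity and smoothness you have not separately verified, delivers the map. Either invoke the realization lemma, as the paper and Traynor~\cite{traynor} effectively do, or carry out the Moser argument for your explicit loop family.

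A secondary point: your explanation of why the slit is essential does not hold up. A null-homotopic embedded loop in the annulus $\{\pi\delta<\rho'<1+\pi\eps\}$ \emph{can} bound an embedded disk of area arbitrarily close to the full area of the annulus --- a C-shaped disk obtained by deleting a thin radial cut --- so there is no homotopy-theoretic area obstruction in the unslit annulus. What forces the slit is nestedness: the disks $\Psi(B^2(c))$ increase with $c$, so their complements in the annulus form a decreasing family of cuts converging to a single cut as $c\to 1$, and normalizing that limiting cut to lie along the positive real axis is exactly what makes the slit-disk the natural codomain. This does not damage the construction, but the justification as you wrote it would not survive scrutiny.
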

	
	\begin{figure}[]
		\centering
		\begin{tikzpicture}[scale=.8]
			\draw [densely dashed](0,0) circle (4.3);
			\draw[densely dashed] (0,0)--(4.3,0);
			\draw (-.5,0) circle (.15); 
			\draw (-.9,0) arc (180:110:.9); 
			\draw (-.9,0) arc (180:250:.9);
			\draw (-.3078, .8457) .. controls (0.1,.97) and (.4,.72) .. (.1,.5); 
			\draw (-.3078, -.8457) .. controls (0.1,-.97) and (.4,-.72) .. (.1,-.5);
			\draw (.1,.5) .. controls ( -.17,.3) and ( -.2,.1)  .. (-.2,0);
			\draw (.1,-.5) .. controls ( -.17,-.3) and ( -.2,-.1)  .. (-.2,0);
			\draw (-1.2,0) arc (180:75:1.2); 
			\draw (-1.2,0) arc (180:285:1.2);
			\draw (.3106, 1.159) .. controls (.8, 1) and ( .8, .4 ) .. (.4,.25);
			\draw (.4,.25) .. controls (.15,.18) and (-.14,.14) .. (-.14,0);
			\draw (.3106, -1.159) .. controls (.8, -1) and ( .8, -.4 ) .. (.4,-.25);
			\draw (.4,-.25) .. controls (.15,-.18) and (-.14,-.14) .. (-.14,0);
			\draw (-1.5,0) arc (180:25:1.5); 
			\draw (-1.5,0) arc (180:335:1.5);
			\draw (1.359,.6339) .. controls ( 1.55,-.1 ) and ( -.12,.2 ) .. (-.12,0);
			\draw (1.359,-.6339) .. controls ( 1.55,.1 ) and ( -.12,-.2 ) .. (-.12,0);
			\draw (-1.8,0) arc (180:15:1.8); 
			\draw (-1.8,0) arc (180:345:1.8);
			\draw (1.739,.4659) .. controls ( 1.8,.19 ) and ( 1.7,.2 ) .. (1.55,.15);
			\draw (1.55,.15) .. controls ( 1,.03 ) and ( -.1,.12 ) .. (-.1,0);
			\draw (1.739,-.4659) .. controls ( 1.8,-.19 ) and ( 1.7,-.2 ) .. (1.55,-.15);
			\draw (1.55,-.15) .. controls ( 1,-.03 ) and ( -.1,-.12 ) .. (-.1,0);
			\draw (-2.1,0) arc (180:10:2.1); 
			\draw (-2.1,0) arc (180:350:2.1);
			\draw (2.068, .3647) .. controls ( 2.1, .1 ) and ( 2 ,.12  ) .. (1.8,.1);
			\draw (1.8,.1) .. controls ( 1.3, .057 ) and ( -.08 ,.075 ) .. (-.08,0);
			\draw (2.068, -.3647) .. controls ( 2.1, -.1 ) and ( 2 ,-.12  ) .. (1.8,-.1);
			\draw (1.8,-.1) .. controls ( 1.3, -.057 ) and ( -.08 ,-.075 ) .. (-.08,0);
			\draw (-2.4,0) arc (180:5:2.4);
			\draw (-2.4,0) arc (180:355:2.4);
			\draw (5:2.4) .. controls ( 1: 2.4 ) and (2 : 2.4) .. (2:2.1);
			\draw (2:2.1) .. controls (2:1.8) and (-.07,.07) .. (-.07,0);
			\draw (-5:2.4) .. controls ( -1: 2.4 ) and (-2 : 2.4) .. (-2:2.1);
			\draw (-2:2.1) .. controls (-2:1.8) and (-.07,-.07) .. (-.07,0);
			\draw (-2.7,0) arc (180:2:2.7);
			\draw (-2.7,0) arc (180:358:2.7);
			\draw (2:2.7) .. controls (1:2.7) and (1:2.7) .. (1:2.4);
			\draw (1:2.4)--(0,.02);
			\draw (-2:2.7) .. controls (-1:2.7) and (-1:2.7) .. (-1:2.4);
			\draw (-1:2.4)--(0,-.02);
			\draw (-3,0) arc (180:2:3);
			\draw (-3,0) arc (180:358:3);
			\draw (2:3) .. controls (.5:3) and (1:3) .. (1:2.5);
			\draw (-2:3) .. controls (-.5:3) and (-1:3) .. (-1:2.5);
			\draw (-3.3,0) arc (180:2:3.3);
			\draw (-3.3,0) arc (180:358:3.3);
			\draw (2:3.3) .. controls (0:3.3) and (1:3.3) .. (.9:2.5);
			\draw (-2:3.3) .. controls (0:3.3) and (-1:3.3) .. (-.9:2.5);
			\foreach \x in {0,.3,.6}
			\draw (1.8:3.6+\x) .. controls (0.3:3.6+\x) and (.6:3.65+\x) .. (.85:2.7);
			\foreach \x in {0,.3,.6}
			\draw (-1.8:3.6+\x) .. controls (-0.3:3.6+\x) and (-.6:3.65+\x) .. (-.85:2.7);
			\foreach \x in {0,.3,.6}
			\foreach \y in {1.8,358.2}
			\draw (-3.6-\x,0) arc (180:\y:3.6+\x);
		\end{tikzpicture}
		\caption{Family of loops defining a symplectomorphism $B^2(1)\rightarrow SD(1+\eps)$.}
		\label{Fig:slit-disk}
	\end{figure}
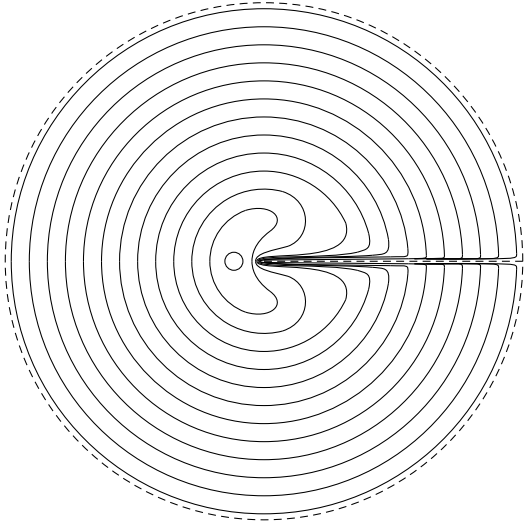	
	
	\begin{proof}
		The left inequality follows from continuity (given such a map). For existence and the right inequality, define a family of loops which avoid the slit as in Figure~\ref{Fig:slit-disk}, and apply Schlenk~\cite[Lemma~3.1]{schlenk}.
	\end{proof}
	
	With these tools we can prove the main Theorem~\ref{Thm:disk-degenerate}.
	
	\begin{proof}[Proof of Theorem~\ref{Thm:disk-degenerate}]
	
		Our technique is to find upper and lower bounds on $c(A)$ by producing symplectic embeddings and applying C1 and C2. We show that these bounds agree with each other and with $c(A+(1,1,\dots,1))$.
		
		For what follows, we define the scaling of $\R^n$ by $\lambda>0$ from $p\in \R^n$ to be the map $q\mapsto \lambda(q-p)+p$. Since $\lambda (q-p) + p = \lambda q + (1-\lambda)p$, any scaling by $\lambda$ from $p$ is equivalent to a scaling from the origin by $\lambda$ followed by translation by $(1-\lambda)p$. So with Lemma~\ref{Lem:translates} we may apply conformality of capacities, axiom C2, on moment regions scaled from points other than the origin. The reason for the requirement that rays from the star-center be transverse to the boundary will become clear in Step~2 with the scaling argument.
		
		\step{1} The lower bound may be computed as follows. Let $p$ be a star-center of $\inte A$, which means that any other point in $\inte A$ may be connected to $p$ by a line contained in $\inte A.$ Given any $\lambda<1$, let $A_{\lambda}$ be the image of $A$ under the scaling of the moment space towards $p$ by $\lambda$. Since $p$ is away from the coordinate planes, $A_{\lambda}$ is bounded away from the coordinate planes and contained in $A$. By Lemma~\ref{Lem:translates} and conformality, $ c(A_{\lambda}) = \lambda c(A+(1,1,\dots,1)) $. Then by monotonicity, $ \lambda c(A+(1,1,\dots,1)) \leq c(A) $, and since $ \lambda<1 $ was arbitrary,
		\[ c(A+(1,1,\dots,1))\leq c(A). \]
		
		\step{2} For the upper bound, we embed $A$ into an expanded version of $A,$ and apply an area-preserving map in each dimension in which $A$ touches a coordinate plane $P_i$. We will assume that $A$ is compact, star-shaped, and that the rays from a star-center $p$ intersect each $\partial A_j$ transversely.
		
		Assume without loss of generality that $A$ touches the first $k$ coordinate planes, and does not touch the others.  Let $p=(\rho_1,\,\dots,\rho_n)$ be the star-center in $A$ noted above. The projection $ \tilde{p}_1 = (0,\rho_2,\,\dots,\rho_n) $ is also a star-center: Choose any other point $q=(x_1,\,\dots,x_n)\in A$. The line from $\tilde{p}_1$ to $q$ is entirely below that from $p$ to $q$ in the $\rho_1$ coordinate. By Criterion~\ref{Crit:disk-degenerate}, any perpendicular dropped from a point in $A$ to $P_1$ remains in $A$. Hence the line from $\tilde{p}_1$ to $q$ is also in $A$, so $\tilde{p}_1$ is a star-center. Repeating in the first $k$ coordinates, we find that $\tilde{p}_k = (0,\dots,\rho_{k+1},\,\dots,\rho_n)$ is a star-center; call this point $\tilde{p}.$ A simple geometric argument making use of Criterion~\ref{Crit:disk-degenerate} shows that the rays from $\tilde{p}$ must also be transverse to each $\partial A_j;$ we omit that here.
		
		The next step will be to expand $A$ to $A_\lambda$ by a finite factor of $\lambda.$ In order to prevent $A_\lambda$ from colliding with coordinate planes, first translate $A$ away from the coordinate planes $P_{k+1}$ through $P_n$ by some large amount. Note that this is possible because by assumption $p_i>0$ for $i>k,$ and furthermore translation in the moment spaces induces a symplectomorphism. So we shall instead compute the capacity of this translate, and relabel it $A.$ Now let $A_\lambda$ be the scaling of $A$ from $\tilde{p}$ by a small $\lambda>1.$
		
		We show that $A\subset \inte A_\lambda.$ Consider any point $q=(x_1,\dots,x_n)\in A$. If $q\in \inte A$ then $q\in \inte A_\lambda,$ so suppose $q\in \partial A.$ Write $q_{1/\lambda}$ for the point mapped to $q$ under the scaling; $q_{1/\lambda}$ will be between $\tilde{p}$ and $q.$ Now since the ray from $\tilde{p}$ to $q$ is transverse to $\partial A,$ it follows that $q_{1/\lambda}$ must be in $\inte A,$ so we can find an open ball $U$ around $q_{1/\lambda}.$ That ball maps under the scaling to $U_\lambda,$ which is an open ball around $q$ in $A_\lambda.$ Thus $q\in \inte A_\lambda,$ and $A\subset \inte A_\lambda.$
		
		Let $\ext A_{\lambda}$ denote the exterior of $A_\lambda$ in $\R^n_{\geq 0}.$ Both $A$ and $A_\lambda$ are compact, so there is some $d$ so that $ 0< d < d_{\lambda} = \frac{1}{2}\dist(A, \ext A_{\lambda}) $. Now $A$ is bounded, so let $a$ be the maximum of the $\rho_1$ coordinate of $A$, and choose $\eps>0$ so that $\eps<d$. Then by Lemma~\ref{Lem:wrap}, there exists $\Psi_{a}:B^2(a)\rightarrow SD^2(a+\eps)$ such that
		\begin{equation} \label{Eqn:bounds}
		\delta<|\Psi_a(z)|^2<|z|^2+\eps
		\end{equation}
		for $\delta>0$. Let $ F_{\eps} = \Psi_{a}\times \id \times \dots \times \id $.
		
		Set $ B = \mu\circ F_{\eps} ( \mu^{-1}(A) )$. Then we claim $ B\subset \inte A_\lambda$. Consider a point $(z_1, \dots, z_n)\in \mu^{-1}(A),$ and let
		\[ (\rho_1, \dots, \rho_n)\equiv \mu(z_1, \dots, z_n) \in A. \]
		By the inequality above, $ \mu\circ F_{\eps}((z_1,\,\dots,z_n)) = (\tilde{\rho_1},\,\dots,\tilde{\rho_n}) $ where $ \tilde{\rho_1}<\rho_1 + \eps $ and $ \tilde{\rho_i}=\rho_i$ for $i>1$. Thus every point in $\mu^{-1}(A)$ is carried by $F_{\eps}$ to a point less than $d$ away from $A$, so $ B\subset \inte A_\lambda $; moreover $ \dist(B,\, \ext A_\lambda)>d_{\lambda} $. Then let $\delta = \frac{1}{2}\min \{\delta,\, d_{\lambda}\} $ and $\gamma = \lambda \delta$ (using $\lambda<2$). Set $A_{\lambda}' = A_\lambda + (\gamma,0,\dots,0).$ The lower bound on the left of Equation~\ref{Eqn:bounds}, together with the distance from $B$ to outside $A_\lambda$, show that in fact $ B\subset A_{\lambda}' $. So by Lemma~\ref{Lem:translates}, $ c(B)\leq c(A_{\lambda}') = \lambda c(A+(\delta,0,\dots,0)) $. Now $\lambda>1$ was arbitrary, so $ c(B)\leq c(A+(\delta,0,\dots,0)) $. Since $A$ and $B$ are symplectomorphic,
		\[ c(A)\leq c(A+(\delta,0,\dots,0)). \]
		
		Repeating the same process in the dimensions up to $k$ and translating up by $\delta$ in the other coordinates shows that for some $\delta>0$, $c(A)\leq c(A+(\delta, \delta, \dots, \delta))$. Combining with the lower bound, and using Lemma~\ref{Lem:translates},
		\[ c(A) = c(A+(1,1,\dots,1)). \qedhere \]
		
	\end{proof}
		
	\begin{Remark}
	It is worth noting that we may like to consider regions $A$ for which $\partial A$ is not completely smooth. The ellipsoid intersections below are one example. The notion of transversality must then be generalized slightly with the goal that $A\subset \inte A_\lambda.$ If $\partial A$ is the gluing of multiple hypersurfaces, it is sufficient that the rays from the star-center be transverse to each of the hypersurfaces at the points where they are glued together.
	\end{Remark}
	
	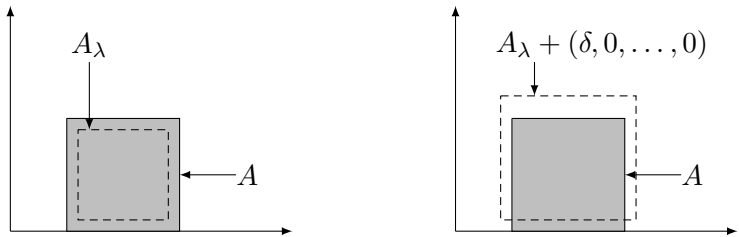
\begin{figure}
		\centering
		\begin{subfigure}[b]{.4\textwidth}
			\centering
			\begin{tikzpicture}[>=latex,scale=1.5]
				\draw [->](0,0)--(0,2);
				\draw [->](0,0)--(2.5,0);
				\filldraw[fill=gray!50!white,even odd rule] (.5,1)--(1.5,1)--(1.5,0)--(.5,0)--(.5,1);
				\draw[densely dashed] (.6,.9)--(1.4,.9)--(1.4,.1)--(.6,.1)--(.6,.9);
				\draw[<-] (.7,.9)--(.7,1.5);
				\node at (.7,1.65){$A_\lambda$};
				\draw[<-] (1.5,.5)--(2,.5);
				\node at (2.1,.5){$A$};
			\end{tikzpicture}
		\end{subfigure}
		\begin{subfigure}[b]{.4\textwidth}
			\centering
			\begin{tikzpicture}[>=latex,scale=1.5]
				\draw [->](0,0)--(0,2);
				\draw [->](0,0)--(2.5,0);
				\filldraw[fill=gray!50!white,even odd rule] (.5,1)--(1.5,1)--(1.5,0)--(.5,0)--(.5,1);
				\draw[densely dashed] (.4,1.2)--(1.6,1.2)--(1.6,0.1)--(.4,0.1)--(.4,1.2);
				\draw[<-] (.7,1.2)--(.7,1.5);
				\node at (1.3,1.65){$A_\lambda + (\delta,0,\dots,0)$};
				\draw[<-] (1.5,.5)--(2,.5);
				\node at (2.1,.5){$A$};
			\end{tikzpicture}
		\end{subfigure}
		\caption{Illustration of the conformality argument for the lower bound (left) and the upper bound (right).}
	\end{figure}
	
\section{Applications} \label{AppECH}

	\subsection{ECH capacities}
	
	The remainder of this paper focuses on 4-dimensional toric domains, with accompanying planar moment regions. Using Michael Hutchings's theory of embedded contact homology (ECH), one can associate real numbers \[ 0=c_0(M)\le c_1(M)\le c_2(M)\le \cdots \] called \emph{ECH capacities} to any 4-dimensional ``Liouville domain'' $M,$ such that each $c_i$ is a symplectic capacity for 4-manifolds. For precise definitions of ECH capacities and Liouville domains, see \cite{hutchings}.
	
	We briefly describe the computation of ECH capacities, as given by Theorem 4.14 of \cite{hut1}. Given a convex body $A$ in the moment space which does not touch any coordinate plane, we can define a norm $\ell_A$, not necessarily symmetric, as follows. Choose an origin in $A$ from which to draw position vectors to $\partial A.$ Let $v_i$ be some vector, and $q_i$ one of the position vectors on $\partial A$ such that the outward normal to $\partial A$ at $q_i$ is parallel to $v_i.$ If $v_i$ has angle between the normals to $\partial A$ at two incident edges of $\partial A$, let $q_i$ be the corner where the edges meet. Then set $\ell_A(v_i)=v_i\cdot q_i.$ It is not hard to check that this yields a well-defined norm; see \cite{hut1} for details.
	
	We compute the ECH capacities according to \cite{hutchings} as follows: for each $k$, $c_k(A)$ is the shortest perimeter length of an oriented lattice-polygon enclosing $k+1$ lattice points, where perimeter length is measured in the norm $\ell_A$ on the edge vectors of the oriented polygon.

	\subsubsection{Embedding ellipsoid intersections into balls}
	
	We now use Theorem~\ref{Thm:disk-degenerate} to compute the second ECH capacity of a family of ellipsoid intersections. This capacity is in turn used to prove Proposition~\ref{Prop:non-squeezing}. Throughout this section, let $a,b,c,d>0$, $a<b$, $c>d$, and put $R=\frac{abc+bcd-acd-abd}{bc-ad}$ (see Figure~\ref{Fig:non-squeezing}). We show that for $2a, 2d\ge R$, $c_2(E(a,b)\cap E(c,d))=R.$ A simple consequence is that $E(a,b)\cap E(c,d)$ symplectically embeds into a ball if and only if it embeds by inclusion (that is, Proposition~\ref{Prop:non-squeezing}). While in principle that result only requires the easier lower bound of Theorem~\ref{Thm:disk-degenerate}, we illustrate the use of Theorem~\ref{Thm:disk-degenerate} to produce the actual ECH capacity, which is sufficient to prove the proposition.
	
	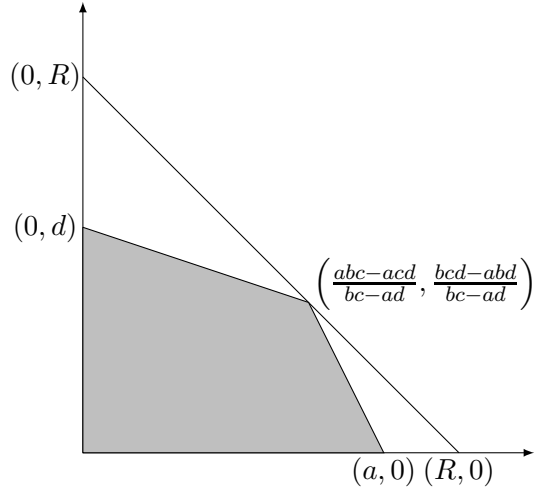
\begin{figure}[here]
		\centering
		\begin{tikzpicture}[>=latex]
		\draw [->](0,0)--(0,6);
		\draw [->](0,0)--(6,0);
		\draw (0,5)--(5,0);
		\filldraw[fill=gray!50!white,even odd rule] (0,3)--(3,2)--(4,0)--(0,0)--(0,3);
		\node at (-0.5,3){$(0,d)$};
		\node at (4,-0.25){$(a,0)$};
		\node at (-.5, 5) {$(0,R)$};
		\node at (5, -.25){$(R,0)$};
		\node at (4.55, 2.25) {$\left(\frac{abc-acd}{bc-ad}, \frac{bcd-abd}{bc-ad}\right)$};
		\end{tikzpicture}
		\caption{The image of $E(a,b)\cap E(c,d)$ under $\mu$ with suitable $a,b,c,d$, and the smallest ball into which it symplectically embeds.}
		\label{Fig:non-squeezing}
	\end{figure}
	
	A short computation, or consideration of Figure~\ref{Fig:non-squeezing}, shows that $B(R)$ is indeed the smallest ball into which $E(a,b)\cap E(c,d)$ embeds by inclusion. We first prove the following lemma:
	
	\begin{Lemma} \label{Lem:echlemma}
		If $2a, 2d\ge R$, then $c_2( E(a,b)\cap E(c,d) )=R$.
	\end{Lemma}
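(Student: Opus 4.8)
The plan is to pass to the moment picture and compute $c_2$ combinatorially. Under $\mu$ the domain $E(a,b)\cap E(c,d)$ becomes the convex quadrilateral $A$ with vertices $(0,0)$, $(a,0)$, the corner $Q=\left(\frac{abc-acd}{bc-ad},\frac{bcd-abd}{bc-ad}\right)$, and $(0,d)$. By construction $Q$ lies on the line $\rho_1+\rho_2=R$, and a short computation gives $R>a$ and $R>d$, so $A$ is contained in the triangle $\mu(B(R))=\{\rho_1+\rho_2\le R\}$. The capacities of a ball give the sequence $0,R,R,2R,\dots$, so $c_2(B(R))=R$, and monotonicity (C1) already yields the upper bound $c_2(A)\le R$. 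The real content is the matching lower bound, which I will obtain by computing $c_2(A)$ exactly.

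Since $A$ meets both axes, the torus action is not free and Hutchings's formula does not apply directly; first I would remove this obstruction with Theorem~\ref{Thm:disk-degenerate}. I must check its hypotheses for $A$: compactness is clear, convexity gives a star-shaped interior, and the transversality condition holds for a star-center in general position once the four edges are treated as glued hypersurfaces in the sense of the Remark following the theorem. Criterion~\ref{Crit:disk-degenerate} holds because $A$ is convex, so every horizontal line (normal to $P_1$) and every vertical line (normal to $P_2$) meets $A$ in a single interval. Theorem~\ref{Thm:disk-degenerate} then gives $c_2(A)=c_2(A+(1,1))$, and $A+(1,1)$ has free action, so the combinatorial formula of Section~\ref{AppECH} computes $c_2(A+(1,1))$.

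To run that formula I note that $\ell_A$ is just the support function of $A$, and that the $\ell_A$-perimeter of a closed lattice polygon is translation-invariant (the support function shifts by a linear term that telescopes around a closed loop), so I may compute it from the shape $A$ itself. For $k=2$ I minimize over oriented lattice polygons enclosing three lattice points. The three configurations to weigh against one another are the unit triangle $\mathrm{conv}\{(0,0),(1,0),(0,1)\}$, whose single ``up-right'' outward normal $(1,1)$ sees the corner $Q$ and contributes $Q_1+Q_2=R$ while its other two normals lie in the third-quadrant zero-cone at the origin vertex of $A$; the doubled horizontal segment $(0,0)$--$(2,0)$, contributing $2d$; and the doubled vertical segment $(0,0)$--$(0,2)$, contributing $2a$. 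This gives $c_2(A)=\min\{R,2a,2d\}$, and the hypotheses $2a,2d\ge R$ force this to equal $R$.

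I expect the main obstacle to be the last step: verifying that no other oriented lattice polygon enclosing three lattice points has smaller $\ell_A$-perimeter than these three. This amounts to checking that any attempt to hide more of a polygon's outward normals in the zero-cone forces a compensating normal into the first quadrant whose support value is already at least $R$; the argument is a finite but slightly delicate case analysis over the small lattice polygons, and it is precisely here---and only here---that the quantitative hypothesis $2a,2d\ge R$ is used, to rule out the two degenerate segment polygons that would otherwise beat the diagonal value $R$.
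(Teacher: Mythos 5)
Your setup is the same as the paper's: pass to the moment region $A$, invoke Criterion~\ref{Crit:disk-degenerate} and Theorem~\ref{Thm:disk-degenerate} to replace $A$ by $A+(1,1)$, and then evaluate Hutchings's combinatorial formula. Your upper bound (monotonicity into $B(R)$ together with $c_2(B(R))=R$) is a legitimate and slightly cleaner alternative to the paper's route, which instead exhibits the unit right triangle as an explicit lattice path of $\ell_{A'}$-length $R$. The identification of the three competing configurations and of the answer $\min\{R,2a,2d\}$ is also correct.

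However, the lower bound is the entire content of the lemma, and you have not proved it: you explicitly defer the verification that no other oriented lattice polygon enclosing three lattice points has $\ell_{A'}$-perimeter below $R$, and you describe that verification as ``a finite but slightly delicate case analysis over the small lattice polygons.'' It is not finite as stated --- there are infinitely many lattice polygons enclosing three lattice points (arbitrarily long thin ones included), so one first needs an a priori reduction. This reduction is where the paper does its real work: writing $\ell_{A'}$ piecewise on the normal cones of $A'$ (the five regions of Figure~\ref{Fig:norm-regions}), it shows that any edge vector whose second coordinate is at least $2$ already contributes at least $\min\{2d,\,R,\,2a\}\ge R$, that two edge vectors with positive second coordinate together contribute at least $2\min\{a,d\}\ge R$, and symmetrically in the first coordinate; only then does $\Gamma$ get confined to a translate of $[0,1]^2$, where the finite check takes over. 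Note also that your remark that the hypothesis $2a,2d\ge R$ is used ``only'' to rule out the two doubled segments is not accurate --- it is used throughout this confinement argument, in every one of the estimates just listed. Without supplying that reduction, the proposal establishes only $c_2(A)\le R$, not the equality claimed.
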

	
	Assuming Lemma~\ref{Lem:echlemma}, observe that Proposition~\ref{Prop:non-squeezing} is immediate:
	
	\begin{proof}[Proof of Proposition~\ref{Prop:non-squeezing}]
		By Corollary 1.3 of \cite{hutchings}, $c_2(B(r))=r$, so we have $\rho\ge R$ by Lemma~\ref{Lem:echlemma}. Since $E(a,b)\cap E(c,d)\subset B(R),$ $\rho\leq R$ and the result follows.
	\end{proof}
	
	\begin{proof}[Proof of Lemma~\ref{Lem:echlemma}]
		Let $A$ be the moment region of $E(a,b)\cap E(c,d)$. Since $A$ satisfies Criterion~\ref{Crit:disk-degenerate}, we know that $c_2(A)=c_2(A') $ for $A' = A + (1,1)$. 
				
		First we observe that the oriented lattice-polygonal path in Figure~\ref{Fig:minimalpathpic} has $\ell_{A'}$-length $R$ when oriented clockwise, so $c_2(A)\le R$.

		Let $\Gamma$ be an oriented lattice path containing 3 lattice points with edge vectors $(\alpha, \beta), (\gamma, \delta), (\epsilon, \zeta)$ (if $\Gamma$ has only two edge vectors, i.e., is just a line segment, the forthcoming argument applies \emph{mutatis mutandis}). Suppose for a contradiction that $\ell_{A'}(\Gamma)<R$. 
		
		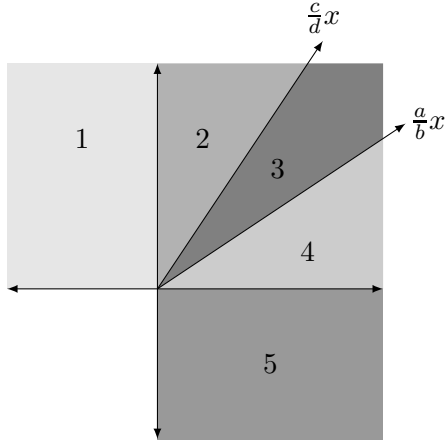
\begin{figure}[]
			\centering
			\begin{tikzpicture}[>=latex, scale=1]
			\fill[fill=black!10!white] (0,0)--(0,3)--(-2,3)--(-2,0)--(0,0);
			\fill[fill=black!30!white] (0,0)--(2,3)--(0,3)--(0,0);
			\fill[fill=black!50!white] (0,0)--(3,2)--(3,3)--(2,3)--(0,0);
			\fill[fill=black!20!white] (0,0)--(3,2)--(3,0)--(0,0);
			\fill[fill=black!40!white] (0,0)--(3,0)--(3,-2)--(0,-2)--(0,0);
			\draw[<->] (-2,0)--(3,0);
			\draw[<->] (0,-2)--(0,3);
			
			\node at (-1,2){ {1}};
			\node at (.6,2){ {2}};
			\node at (1.6,1.6){ {3}};
			\node at (2,.5){ {4}};
			\node at (1.5,-1){ {5}};		
			\draw[->] (0,0)--(2.2,3.3);
			\draw[->] (0,0)--(3.3,2.2);
			
			\node at (2.2,3.6){$\frac{c}{d}x$};
			\node at (3.6,2.2){$\frac{a}{b}x$};
			
			\end{tikzpicture}
			\caption{Calculation of $\ell_{A'}$-length by region.}
			\label{Fig:norm-regions}
		\end{figure}
		
		We first claim that $\beta,\,\delta,\,\zeta \le 1$ and that at most one is positive. Suppose without loss of generality that $\beta\ge 2$. Depending on the region in which $ (\alpha,\beta) $ lies (or its slope $ \frac{\beta}{\alpha} $, Figure~\ref{Fig:norm-regions}), the $ \ell_{A'} $-length is determined by cases:
		
		\[
		\ell_{A'}((\alpha,\beta)) = \begin{cases}
		(\alpha,\beta)\cdot (0,d)& \text{Region 1,\,2: $\alpha\le 0$ or $\frac{\beta}{\alpha}\ge\frac{c}{d}$} \\
		(\alpha,\beta)\cdot \big(\frac{abc-acd}{bc-ad}, \frac{bcd-abd}{bc-ad}\big)&\text{Region 3:  $\frac{c}{d}\le \frac{\beta}{\alpha}\le \frac{a}{b}$}\\
		(\alpha,\beta)\cdot(a,0)&\text{Region 4: $0<\frac{\beta}{\alpha}\le\frac{a}{b}$.}
		\end{cases}
		\]
		
		We treat each case separately. In Region 1, we have $(\alpha,\beta)\cdot(0,d)=\beta d\ge 2d\ge R$, a contradiction. In Region 2, $\ell_{A'}((\alpha,\beta))=(\alpha,\beta)\cdot \big(\frac{abc-acd}{bc-ad}, \frac{bcd-abc}{bc-ad}\big)$ and $\alpha\ge 1$. Hence $(\alpha,\beta)\cdot \big(\frac{abc-acd}{bc-ad}, \frac{bcd-abc}{bc-ad}\big)>(1,1)\cdot \big(\frac{abc-acd}{bc-ad}, \frac{bcd-abc}{bc-ad}\big)=R$. Lastly, in Region 3, $\ell_{A'}((\alpha,\beta))=(\alpha,\beta)\cdot(a,0)$ and $\alpha>\beta$, so $\ell_{A'}((\alpha,\beta))=\alpha a>2a\ge R$. Thus $\beta,\,\delta,\,\zeta\le 1$.
		
		To show that at most one of $ \beta,\,\delta,\,\gamma $ is positive, assume without loss of generality that $\beta,\,\delta \geq 1$. Another calculation as above shows that both $\ell_{A'}((\alpha,\beta))$ and $\ell_{A'}((\gamma,\delta))$ are $\ge\min\{a,d\}$, so $\ell_{A'}(\Gamma)\ge 2\min\{a,d\}\ge R$, a contradiction.
		
		A symmetric argument but with Regions 2, 3, 4, and 5 shows that $\alpha,\,\gamma,\,\epsilon\le 1$ and that at most one is positive. These facts imply that the maximum displacement in either coordinate is 1, that is, $\Gamma$ lies in $[0,1]^2$ up to translation. We check that the shortest lattice path containing 3 lattice points in $[0,1]^2$ has $\ell_{A'}$-length $R$, so $\Gamma$ cannot exist.
\end{proof}
	
	\begin{figure}[!h]
		\centering
		\begin{tikzpicture}[>=latex]
		\foreach \x in {0,1,2}
		\foreach \y in {0,1,2}
		\node at (\x,\y)[circle,fill=black][scale=0.4]{};
		\draw[->] (0,0)--(.6,.6);
		\draw (.5,.5)--(1,1);
		\draw[->](1,1)--(1,0.4);
		\draw (1,0.5)--(1,0);
		\draw[->](1,0)--(.4,0);
		\draw (.5,0)--(0,0);
		\end{tikzpicture}
		\caption{The minimal path for $c_2(A)$ in Lemma~\ref{Lem:echlemma}.}
		\label{Fig:minimalpathpic}
	\end{figure}
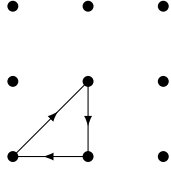

	\subsection{Toric Domains with Free Action} \label{FreeDomains}

	The proof of Corollary~\ref{Cor:NonSharp} simply combines the embeddings involved in the proof of Theorem~\ref{Thm:disk-degenerate} with the result of Hind and Lisi (\cite[Thm. 1.1]{hind}) that a symplectic embedding $P(1,2)\hookrightarrow B^4(a)$ is possible if and only if $a\geq 3.$
	
	\begin{proof}[Proof of Corollary~\ref{Cor:NonSharp}]
	
	Suppose to the contrary that $a<3$ is given for which we can find an embedding $f:P^*(1,2)\hookrightarrow B^*(a).$ Let $\lambda>1$ be close to 1 such that $\lambda^2 a<3.$ Let $P_\lambda^*(1,2)=\mu^{-1}(\mu(P(\lambda,2\lambda))+(1,1))$ and $B_\lambda^*(a)=\mu^{-1}(\mu(B^4(\lambda a))+(1,1)).$ After scaling by $\lambda,$ we can find an embedding $f_\lambda : P_\lambda^*(1,2)\hookrightarrow B_\lambda^*(a).$ This is combined with the embeddings from the proof of Theorem~\ref{Thm:disk-degenerate} as follows:
	
	First, we can find a symplectic embedding $F:P(1,2)\hookrightarrow P_\lambda^*(1,2)$ by the same technique illustrated in that theorem since $P_\lambda^*(1,2)$ is just the translated expansion of $P(1,2).$ We also have the inclusion embedding $\iota: B_\lambda^*(a)\hookrightarrow B(\lambda^2 a)$ because of the translation law (Lemma~\ref{Lem:translates}) above. Combining these we get \[\iota\circ f_\lambda\circ F: P(1,2)\hookrightarrow B(\lambda^2 a).\]
	Since $\lambda^2 a<3,$ this violates Theorem~1.1 of \cite{hind}. Thus no such embedding $f:P^*(1,2)\hookrightarrow B^*(a)$ exists.
	\end{proof}
	By Theorem~\ref{Thm:disk-degenerate}, the ECH and Ekeland-Hofer capacities of $P^*(1,2)$ and $B^*(a)$ are the same as those of $P(1,2)$ and $B(a),$ so neither of these capacities give sharp obstructions to embedding $P^*(1,2)$ into $B^*(a).$

\paragraph{Acknowledgments.}
We thank our advisor Daniel Cristofaro-Gardiner and the UC Berkeley Geometry, Topology and Operator Algebras RTG Summer Research Program for Undergraduates 2013, supported by NSF grant DMS-0838703. We also thank Michael Hutchings for helpful advice and direction for this work, and the anonymous referee for valuable feedback.


\begin{thebibliography}{www}

\bibitem{gardinerconcave} K. Choi, D. Cristofaro-Gardiner, D. Frenkel, M. Hutchings, V. Ramos, {\em Symplectic embeddings into four-dimensional concave toric domains}, arXiv:1310.6647.

\bibitem{chls} K. Cieliebak, H. Hofer, J. Latschev, and F. Schlenk, {\em
	Quantitative symplectic geometry\/}, Dynamics, ergodic theory, and
	geometry, 1-44, Math. Sci. Res. Inst. Publ. {\bf 54}, Cambridge
	University Press, 2007.

\bibitem{EH1} I. Ekeland and H. Hofer, {\em Symplectic topology and Hamiltonian dynamics. II},
Math. Z. 203 (1990), 553–567.

\bibitem{frenkel} D. Frenkel and D. M\"uller, {\em Symplectic embeddings of 4-dimensional ellipsoids into cubes\/}, arXiv:1210.2266.

\bibitem{gromov} M. Gromov, {\em Pseudoholomorphic curves in symplectic manifolds\/}, Invent. Math. {\bf 82} (1985), 307--347.

\bibitem{hind} R. Hind and S. Lisi, {\em Symplectic embeddings of polydisks\/}, arXiv:1304.3065.

\bibitem{hutchings} M. Hutchings, {\em Quantitave Embedded Contact Homology\/}, arXiv:1005.2260.

\bibitem{hut1} M. Hutchings, {\em Lecture notes on embedded contact homology}, arXiv:1303.5789.

\bibitem{hut2} M. Hutchings, \emph{Recent progress on symplectic embedding problems in four dimensions}. PNAS 2011, 108 (20).

\bibitem{mcduff2} D. McDuff, \emph{The Hofer conjecture on embedding symplectic ellipsoids.} J. Differential
Geom. 88 (2011) 519–532.

\bibitem{mcduff} D. McDuff, F. Schlenk, \emph{The embedding capacity of 4-dimensional symplectic ellipsoids}. Annals of Mathematics 175 (2012), 1191-1282.

\bibitem{schlenk} F. Schlenk, {\em Embedding problems in symplectic
	geometry\/}, de Gruyter Expositions in Mathematics {\bf 40}, Walter de
	Gruyter, Berlin, 2005.

\bibitem{traynor} L. Traynor, {\em Symplectic packing
	constructions\/}, J.\ Differential Geometry {\bf 42} (1995),
	411--429.
	
\bibitem{wieck} I. Wieck, {\em Explicit symplectic packings: symplectic
	tunnelling and new maximal constructions\/}, PhD thesis,
	Universit\"at zu K\"oln, Shaker-Verlag, Aachen (2009).

\end{thebibliography}
\end{document}